\DeclareMathOperator{\card}{card}
\newtheorem{theorem}{Theorem}[section]
\newtheorem{lemma}[theorem]{Lemma}
\newtheorem{corollary}[theorem]{Corollary}
\newtheorem{remark}[theorem]{Remark}
\begin{document}

\begin{center}
{\bf Error Analysis of Truncation Legendre Method \\for Solving Numerical Differentiation}
\end{center}

\vspace*{5mm}
\centerline{\textsc{M. Kyselov$\!\!{}^{\dag}$}}

\vspace*{5mm}
\centerline{$\!\!{}^{\dag}\!\!$ Institute of Mathematics, National Academy of Sciences of Ukraine, Kyiv}

\begin{abstract}
We study the problem of numerical differentiation of functions from weighted Wiener classes.
We construct and analyze a truncation Legendre method to recover arbitrary order derivatives.
The main focus is on obtaining its error estimates in integral and uniform metrics.

\vspace*{3mm}

{\textit{Key words:} Numerical differentiation, Legendre polynomials, truncation method, weighted Wiener classes, error estimates}

\vspace*{3mm}

\textit{2020 Mathematics Subject Classification:} Primary: 65D25; Secondary: 41A25, 42C10.

\end{abstract}

\section{Introduction. Description of the problem}


The problem of numerical differentiation poses a central challenge in computational mathematics, 
with widespread applications across scientific research and engineering disciplines. 
This challenge gains particular prominence in situations where analytical computation of function derivatives 
presents difficulties or proves impossible, especially when working with approximate or noisy functional data. 
It is noteworthy that numerical differentiation, despite its extensive historical development, continues to attract 
significant attention within the mathematical research community (see, for example,
\cite{Dolgopolova&Ivanov_USSR_Comput_Math_Math_Phys_1966_Eng}, \cite{Ramm_1968_No11}, \cite{VasinVV_1969_V7_N2},
\cite{Cul71}, \cite{And84}, \cite{EgorKond_1989}, \cite{Groetsch_1992_V74_N2},  \cite{Qu96},
\cite{Hanke&Scherzer_2001_V108_N6}, \cite{RammSmir_2001}, \cite{Ahn&Choi&Ramm_2006},
\cite{ Wang_Hon_Ch_2006},  \cite{Nakamura&Wang&Wang_2008},\cite{Zhao&Meng&Zhao&You&Xie_2016}, \cite{SSS_CMAM}).

The fundamental challenge associated with numerical differentiation lies in its inherent instability, 
rendering it ill-posed according to Hadamard's definition.
Specifically, small perturbations in input data can result in substantial errors when evaluating the deviation 
of approximate derivatives from exact ones.

The present investigation focuses on the truncated Legendre method for addressing numerical differentiation problems.
It is important to note that this method was initially examined in the context of numerical differentiation
in \cite{Zhao_2010} and \cite{Lu&Naum&Per}.
This approach has demonstrated several beneficial characteristics, notably its straightforward algorithmic implementation 
and the elimination of requirements for solving equation systems.
Consequently, research of the truncated Legendre method has been extended to various classes of differentiable functions
(see, e.g., \cite{Sol_Stas_UMZ2022}, \cite{Sem_Sol_ETNA}, \cite{Sem_Sol_CMAM2024}).

The aim of our investigation is to derive error bounds for this method when applied to functions belonging 
to weighted Wiener classes for the reconstruction of their derivatives of arbitrary order.
In contrast to earlier investigations, which predominantly concentrated on first-order derivatives and 
their reconstruction in the metric of specific $C$ and $L_2$ spaces, we conduct a thorough analysis 
across an extensive range of parameters characterizing the function class under investigation, derivative order, 
and metrics of both input and output spaces.

In developing our proposed approach, particular emphasis is placed on identifying the optimal value of 
the truncation parameter, which minimizes the combined error arising from series truncation and input data perturbations. 
We establish explicit relationships between this parameter and the input data error level, function smoothness, 
and the order of the derivative.

Our findings possess both theoretical importance, which is crucial for comprehending the fundamental nature 
of numerical differentiation problems, and practical significance in creating stable computational algorithms 
for diverse applied problems.

We note that our investigation generalizes existing results to encompass derivatives of arbitrary order and 
various output metrics (integral and uniform), enabling a more comprehensive characterization of the capabilities 
and effective application conditions of the truncated expansion method for numerical differentiation.

To establish our theoretical framework, we introduce the essential mathematical concepts.
We denote by $L_{q}$, $2\le q\le \infty$, the space of real-valued functions $f(t)$
that are integrable to the $q$-th power on the interval $[-1, 1]$,
with norms specified as:
$$
\|f\|_{q}^q := \int_{-1}^1 |f(t)|^q \, d t < \infty ,
\quad 2 \le q < \infty,
$$
$$
\|f\|_{\infty}:=
\mbox{ess}  \sup\limits_{\! \! \! \! \! \! \! \! \! \! \! \! \! t \in [-1, 1]} |f(t)| < \infty ,
\quad q = \infty .
$$

We employ the system of orthonormal Legendre polynomials $\{\varphi_k(t)\}_{k=0}^\infty$ given by:
$$
\varphi_k(t) = \sqrt{k + 1/2}(2^k k!)^{-1} \frac{d^k}{dt^k}[(t^2 - 1)^k], \quad k = 0,1,2,\dots .
$$
In the Hilbert space $L_{2}$, the norm may be represented using these Legendre polynomials as:
$$
\|f\|_2^2 = \sum_{k=0}^{\infty}|\langle f, \varphi_k \rangle|^2 ,
$$
where the Fourier-Legendre coefficients are defined by:
$$
\langle f, \varphi_k\rangle=\int_{-1}^{1} f (t)\, \varphi_k(t)\, dt,
\quad k=0,1,2,\ldots .
$$

Furthermore, we denote $C$ as the space of continuous real-valued functions on $[-1,1]$
and $\ell_p$, $1\leq p\leq\infty$, as the space of numerical sequences
$\overline{x}=\{x_{k}\}_{k\in\mathbb{N}_0}$ that satisfy:
$$
\|\overline{x}\|_{\ell_p}  := \left\{
\begin{array}{cl}
\bigg(\sum\limits_{k\in\mathbb{N}_0} |x_{k}|^p\bigg)^{\frac{1}{p}} < \infty ,
 \ & 1\leq p<\infty ,
\\\\
\sup\limits_{k\in\mathbb{N}_0}  |x_{k}| < \infty ,
  \ & p=\infty .
\end{array}
\right.
$$

For our theoretical analysis, we introduce the functional spaces:
$$
W_{s}^\mu =\{f\in L_{2}: \quad
\|f\|_{s,\mu}^s :=\sum_{k=0}^{\infty} ({\max\{1,k\}})^{s\mu}|\langle f,
\varphi_k\rangle|^s<\infty\},
$$
where $\mu>0$ and $1\le s<\infty$. We employ the same notation for both the space and its unit ball: $W_{s}^{\mu} = \{f\in W_{s}^{\mu}\!: \|f\|_{s,\mu} \leq 1\}$, which we term a function class. The specific interpretation, space or class, will be clear from the context.
These functional spaces $W_{s}^\mu$ are extensively studied in approximation theory and are recognized as weighted Wiener classes (as discussed in \cite{Kolom2023}).

\begin{remark} \label{Tom}
We introduce $L^r_{2}$, $r=1,2,\ldots$, as the space of functions $f(t)$, where $f, f',\ldots, f^{(r-1)}$
demonstrate absolute continuity on any interval $(\eta_1,\eta_2)$ with $\eta_1>-1$ and $\eta_2<1$, and satisfy:
$$\int_{-1}^{1} (1-t)^{r} (1+t)^{r} |f^{(r)}(t)|^2 \, d t < \infty .$$
Based on \cite{Tom}, the space $L^r_{2}$ can be endowed with the norm:
$$
\|f\|_{L^r_{2}} := \Big(\sum_{j=0}^r \int_{-1}^{1} (1-t)^{j} (1+t)^{j} |f^{(j)}(t)|^2 dt\Big)^{1/2}
$$
and the following equivalence is established:
1) $f\in L^r_{2}$ if and only if 2) $f\in W^r_{2}$.
\end{remark}

Any function $f(t)$ from $W_{s}^{\mu}$ admits the representation:
\begin{equation}\label{function}
	f(t) = \sum_{k=0}^{\infty}\langle f, \varphi_k \rangle \varphi_k(t),
\end{equation}
while its $r$-th order derivative with $r\in \mathbb{N}$ can be written as:
\begin{equation}\label{r_deriv}
	f^{(r)}(t) = \sum_{k=r}^{\infty}\langle f, \varphi_k \rangle \varphi_k^{(r)}(t).
\end{equation}

In our problem setup, we assume that the input error measurements are conducted using the $\ell_p$ metric.
More precisely, we will deal with a sequence of real numbers $\overline{f^\delta}= \{\langle
f^\delta, \varphi_{k} \rangle\}_{k\in\mathbb{N}_0}$ such that, for $\overline{\xi}=
\{\xi_{k}\}_{k\in\mathbb{N}_0}$, $\xi_{k}=\langle f-f^\delta, \varphi_{k}\rangle$, and for some $1\leq
p\leq \infty$, the following constraint is satisfied:
\begin{equation}\label{perturbation2}
\|\overline{\xi}\|_{\ell_p} \leq \delta , \quad 0<\delta <1 .
\end{equation}

\section{Truncation method. Error estimate in the uniform metrics}
For reconstructing the $r$-th derivative (\ref{r_deriv}) of functions $f\in W_s^\mu$, $r=1,2,\ldots$,
we implement the truncation method:
\begin{equation}\label{ModVer}
	\mathcal{D}_N^{(r)} f^\delta(t) = \sum_{k=r}^{N} \langle f^\delta, \varphi_k\rangle \varphi_k^{(r)}(t).
\end{equation}

For our calculations, we need the following formula for differentiation of Legendre polynomials:
\begin{equation} \label{dif_ort}
\varphi'_k(t) = 2\sqrt{k + 1/2}\mathop{{\sum}^*}\limits_{l=0}^{k-1} \sqrt{l + 1/2}\varphi_l(t), \quad k \in \mathbb{N} ,
\end{equation}
where in aggregate \quad $ \mathop{{\sum}^*}\limits_{l=0}^{k-1} \bar{\varphi}_{l}(t)$ the summation
is extended over only those terms for which $k+l$ is odd.

Let us write the error of the method (\ref{ModVer}) as
\begin{equation}\label{fullError}
	f^{(r)}(t)-\mathcal{D}_N^{(r)} f^\delta(t)= \left(f^{(r)}(t)-\mathcal{D}_N^{(r)}
	f(t)\right)+\left(\mathcal{D}_N^{(r)} f(t)-\mathcal{D}_N^{(r)} f^\delta(t)\right)
\end{equation}
with
$$
\mathcal{D}_N^{(r)} f(t) = \sum_{k=r}^{N} \langle f, \varphi_{k}\rangle \varphi^{(r)}_k(t) .
$$

The parameter $N$ should be chosen depending on $\delta$, $p$, $s$ and $\mu$ so as
to minimize the error estimate for the method (\ref{ModVer}).

We use the notation $A\asymp B$ when there are absolute constants $C_1, C_2>0$ such that $C_1 B \le A \le C_2 B$ is true.

An upper bound for the norm of the first difference on the right-hand side of (\ref{fullError}) is contained in the following statement.

\begin{lemma}\label{lemma_BoundErrHCC}
	Let $f\in W^\mu_{s}$, $1\leq s< \infty$, $\mu>2r-1/s+3/2$. Then it holds
	$$
	\|f^{(r)}-\mathcal{D}^{(r)}_N f\|_{C}\leq c\|f\|_{s,\mu} N^{-\mu+2r-1/s+3/2} .
	$$
\end{lemma}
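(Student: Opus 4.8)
The plan is to recognize the difference $f^{(r)}-\mathcal{D}_N^{(r)}f$ as the tail of the Fourier--Legendre expansion (\ref{r_deriv}) and to control it term by term in the uniform metric. Subtracting (\ref{ModVer}) with exact data from (\ref{r_deriv}) gives
$$
f^{(r)}(t)-\mathcal{D}_N^{(r)} f(t) = \sum_{k=N+1}^{\infty} \langle f, \varphi_k\rangle \varphi_k^{(r)}(t),
$$
so by the triangle inequality
$$
\|f^{(r)}-\mathcal{D}_N^{(r)} f\|_{C}\leq \sum_{k=N+1}^{\infty} |\langle f, \varphi_k\rangle|\,\|\varphi_k^{(r)}\|_{C}.
$$
Everything then reduces to a sharp bound for $\|\varphi_k^{(r)}\|_{C}$, followed by a summation over the tail that exploits the membership $f\in W_s^\mu$.

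The main step, and the one I expect to be the crux, is the estimate $\|\varphi_k^{(r)}\|_{C}\asymp k^{2r+1/2}$. Writing $\varphi_k=\sqrt{k+1/2}\,P_k$ in terms of the classical unnormalized Legendre polynomials $P_k$, I would invoke the classical fact that the derivatives of Legendre polynomials attain their maximum modulus on $[-1,1]$ at the endpoints, i.e.\ $\|P_k^{(r)}\|_{\infty}=P_k^{(r)}(1)$ (this follows, for instance, from the representation of $P_k^{(r)}$ as a Gegenbauer polynomial $C_{k-r}^{(r+1/2)}$, whose extremum on $[-1,1]$ is located at $t=1$ since $r+1/2>0$). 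Combining this with the explicit endpoint value $P_k^{(r)}(1)=\frac{(k+r)!}{2^r r!\,(k-r)!}$ and the elementary asymptotics $\frac{(k+r)!}{(k-r)!}\asymp k^{2r}$ for fixed $r$ yields $\|\varphi_k^{(r)}\|_{C}=\sqrt{k+1/2}\,P_k^{(r)}(1)\asymp k^{2r+1/2}$.

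With this in hand, the remaining work is a H\"older argument. Splitting $k^{2r+1/2}=(\max\{1,k\})^{\mu}\cdot(\max\{1,k\})^{2r+1/2-\mu}$ and applying H\"older's inequality with exponents $s$ and $s'=s/(s-1)$ gives
$$
\sum_{k=N+1}^{\infty} |\langle f, \varphi_k\rangle|\,\|\varphi_k^{(r)}\|_{C}\leq c\,\Big(\sum_{k=N+1}^{\infty} (\max\{1,k\})^{s\mu}|\langle f,\varphi_k\rangle|^s\Big)^{1/s}\Big(\sum_{k=N+1}^{\infty} k^{(2r+1/2-\mu)s'}\Big)^{1/s'}.
$$
The first factor is bounded by $\|f\|_{s,\mu}$. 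For the second, the hypothesis $\mu>2r-1/s+3/2$ is precisely the condition $(2r+1/2-\mu)s'<-1$ that guarantees convergence, and comparison with an integral gives $\sum_{k>N} k^{(2r+1/2-\mu)s'}\asymp N^{(2r+1/2-\mu)s'+1}$, so the second factor is $\asymp N^{2r+1/2-\mu+1-1/s}=N^{-\mu+2r-1/s+3/2}$. (For $s=1$ one replaces H\"older by the $\ell_1$--$\ell_\infty$ duality, using $\sup_{k>N}k^{2r+1/2-\mu}\asymp N^{2r+1/2-\mu}$, and the same exponent results.) Multiplying the two factors yields the claimed bound. The only genuinely nontrivial ingredient is the endpoint-maximum property behind $\|\varphi_k^{(r)}\|_{C}$; once that sharp constant is available, the series manipulations are routine and the exponent falls out exactly as stated.
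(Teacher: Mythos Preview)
Your argument is correct and follows the same overall skeleton as the paper: reduce to the tail series, establish the pointwise bound $\|\varphi_k^{(r)}\|_C\le c\,k^{2r+1/2}$, and finish with H\"older's inequality exactly as the paper does. The one genuine difference is how you justify the key bound on $\|\varphi_k^{(r)}\|_C$. You invoke the Gegenbauer representation and the closed endpoint value $P_k^{(r)}(1)=\frac{(k+r)!}{2^r r!\,(k-r)!}$, which is clean and gives the sharp constant in one stroke. The paper instead iterates the differentiation identity (\ref{dif_ort}) $r$ times to expand $\varphi_k^{(r)}$ as a nested sum in the basis $\{\varphi_{l_r}\}$ and then bounds each $\|\varphi_{l_r}\|_C$ by $\sqrt{l_r+1/2}$; summing the nested indices produces the factor $k^{2r}$. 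Your route is shorter for this lemma, but the paper's nested-sum representation is not wasted effort: the same expansion (with a subsequent change of summation order) is the engine behind the $L_2$ estimates in Lemmas~\ref{lemma_BoundErrHC} and~\ref{lemma_BoundPertHC}, where a mere sup bound on $\|\varphi_k^{(r)}\|_C$ would not suffice. So the two approaches trade local economy for reusable machinery.
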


\begin{proof}
Using (\ref{dif_ort}), we write down the representation
$$
f^{(r)}(t)-\mathcal{D}^{(r)}_N f(t) =
\sum_{k=N+1}^\infty \langle f, \varphi_{k} \rangle \varphi_k^{(r)} (t) =
2 \sum_{k=N+1}^\infty \sqrt{k+1/2}\, \langle f, \varphi_{k} \rangle \mathop{{\sum}^*}\limits_{l_1=0}^{k-1}  \sqrt{l_1+1/2} \ \varphi_{l_1}^{(r-1)}(t)= \ldots
$$
$$
	 = 2^r \sum_{k=N+1}^\infty \sqrt{k+1/2}\, \langle f, \varphi_{k} \rangle
	\mathop{{\sum}^*}\limits_{l_1=r-1}^{k-1} (l_1+1/2) \mathop{{\sum}^*}\limits_{l_2=r-2}^{l_1-1} (l_2+1/2)
$$
\begin{equation}  \label{first_dif}
    \ldots
	\mathop{{\sum}^*}\limits_{l_{r-1}=1}^{l_{r-2}-1} (l_{r-1}+1/2)
	\mathop{{\sum}^*}\limits_{l_{r}=0}^{l_{r-1}-1} \sqrt{l_r+1/2} \ \varphi_{l_r}(t)  .
\end{equation}

It is well-known (see for example \cite{Suetin}, p. 128) that
$$
\max_{-1\le t\le 1} |\varphi_k(t)| = \varphi_k(1) := \sqrt{k+1/2},\quad k=0,1,2,\ \ldots .
$$

Now we can estimate
\begin{equation}
\begin{split}
\|f^{(r)} - D_N^{(r)} f\|_C &\leq 2^r \sum_{k=N+1}^{\infty} \sqrt{k + 1/2}\,|\langle f, \varphi_k\rangle| \sum_{l_1=r-1}^{k-1} (l_1 + 1/2) \sum_{l_2=r-2}^{l_1-1} (l_2 + 1/2)  \\
&\quad \ldots \sum_{l_{r-2}=1}^{l_{r-3}-1} (l_{r-2} + 1/2) \sum_{l_{r-1}=1}^{l_{r-2}-1} (l_{r-1} + 1/2) \sum_{l_r=0}^{l_{r-1}-1} \sqrt{l_r + 1/2}\,\|\varphi_{l_r}\|_C \\
&\leq 2^r \sum_{k=N+1}^{\infty} \sqrt{k + 1/2}\,|\langle f, \varphi_k\rangle| \sum_{l_1=r-1}^{k-1} (l_1 + 1/2) \sum_{l_2=r-2}^{l_1-1} (l_2 + 1/2)  \\
&\quad \ldots \sum_{l_{r-2}=1}^{l_{r-3}-1} (l_{r-2} + 1/2) \sum_{l_{r-1}=1}^{l_{r-2}-1} (l_{r-1} + 1/2) \sum_{l_r=0}^{l_{r-1}-1} (l_r + 1/2) \\
& \leq c \sum_{k=N+1}^{\infty} \sqrt{k + 1/2} \ k^{2r}|\langle f, \varphi_k\rangle| = c \sum_{k=N+1}^{\infty} k^{2r+1/2}|\langle f, \varphi_k\rangle|.
\end{split}
\end{equation}

Let's start with the case $1< s< \infty$. Then using the Hölder inequality and the definition of $W^\mu_s$, we continue for $\mu>2r-1/s+3/2$
$$
\|f^{(r)}-\mathcal{D}^{(r)}_N f\|_C \leq c \sum_{k=N+1}^{\infty} k^\mu|\langle f, \varphi_k\rangle|k^{2r+1/2-\mu}
$$
$$
\leq c\left(\sum_{k=N+1}^{\infty} k^{s\mu}|\langle f, \varphi_k\rangle|^s\right)^{1/s} \left(\sum_{k=N+1}^{\infty} k^{(2r+1/2-\mu)\frac{s}{s-1}}\right)^{(s-1)/s}
$$
$$
\leq c\|f\|_{s,\mu} N^{-\mu+2r-1/s+3/2}.
$$
In the case of $s = 1$ the assertion of Lemma is proved similarly.
\end{proof}

\begin{lemma}\label{lemma_BoundPertHCC}
	Let the condition (\ref{perturbation2}) be satisfied for $1\le p\le \infty$. Then for arbitrary function $f\in L_2$ it holds
	$$
	\|\mathcal{D}^{(r)}_N f - \mathcal{D}^{(r)}_N f^\delta\|_C \leq c\delta N^{2r-1/p+3/2}.
	$$
\end{lemma}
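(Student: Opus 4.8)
The plan is to mirror the argument of Lemma~\ref{lemma_BoundErrHCC}, exploiting that the quantity to be bounded is again a truncated Fourier--Legendre series with differentiated polynomials. Writing $\xi_k = \langle f - f^\delta, \varphi_k\rangle$, linearity of $\mathcal{D}_N^{(r)}$ gives
$$
\mathcal{D}^{(r)}_N f(t) - \mathcal{D}^{(r)}_N f^\delta(t) = \sum_{k=r}^{N} \xi_k\, \varphi_k^{(r)}(t),
$$
so the only structural difference from Lemma~\ref{lemma_BoundErrHCC} is that the summation runs over the finite range $r \le k \le N$ rather than the tail $k > N$, and the coefficients are the perturbation components $\xi_k$ instead of $\langle f, \varphi_k\rangle$.

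First I would apply the differentiation formula (\ref{dif_ort}) $r$ times exactly as in the derivation of (\ref{first_dif}), obtaining the same nested-sum representation of each $\varphi_k^{(r)}$ in terms of $\varphi_{l_r}$; this step is valid termwise in $k$, so it transfers verbatim to the finite sum. Passing to the $C$-norm and using $\|\varphi_{l_r}\|_C = \sqrt{l_r + 1/2}$ together with the crude bound $\sqrt{l_r+1/2} \le l_r + 1/2$, the same chain of estimates as in Lemma~\ref{lemma_BoundErrHCC} collapses the inner $r$ nested sums into a factor bounded by $c\,k^{2r}$, yielding
$$
\|\mathcal{D}^{(r)}_N f - \mathcal{D}^{(r)}_N f^\delta\|_C \le c \sum_{k=r}^{N} k^{2r+1/2} |\xi_k|.
$$

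The genuinely new step is to convert this into the desired power of $N$ using the $\ell_p$ constraint (\ref{perturbation2}) rather than the class membership of $f$. For $1 < p < \infty$ I would apply Hölder's inequality with conjugate exponent $p' = p/(p-1)$,
$$
\sum_{k=r}^{N} k^{2r+1/2} |\xi_k| \le \Big(\sum_{k=r}^{N} |\xi_k|^p\Big)^{1/p} \Big(\sum_{k=r}^{N} k^{(2r+1/2)p'}\Big)^{1/p'} \le \delta\, \Big(\sum_{k=r}^{N} k^{(2r+1/2)p'}\Big)^{1/p'},
$$
and then estimate the power sum by its largest term scaled by the number of terms, $\big(\sum_{k=r}^N k^{(2r+1/2)p'}\big)^{1/p'} \le c\, N^{2r+1/2+1/p'} = c\, N^{2r+3/2-1/p}$, where the final identity uses $1/p' = 1 - 1/p$. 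This produces exactly the claimed exponent. The endpoint cases are immediate: for $p = \infty$ one bounds $|\xi_k| \le \delta$ and sums $k^{2r+1/2}$ directly to get $N^{2r+3/2}$, while for $p = 1$ one factors out $\max_{r\le k\le N} k^{2r+1/2} = N^{2r+1/2}$ and uses $\sum_k |\xi_k| \le \delta$.

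I do not anticipate a substantial obstacle: the polynomial-differentiation machinery is inherited from Lemma~\ref{lemma_BoundErrHCC}, and the only care needed is in tracking the exponent through the Hölder step and verifying that the single formula $2r - 1/p + 3/2$ correctly captures all three regimes $p=1$, $1<p<\infty$, and $p=\infty$. The finite summation range (bounded above by $N$) is precisely what makes the power sum trivially controllable and fixes the growth rate at $N^{2r+3/2-1/p}$.
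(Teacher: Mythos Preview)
Your proposal is correct and follows essentially the same route as the paper: expand $\varphi_k^{(r)}$ via iterated use of (\ref{dif_ort}), collapse the nested sums into a factor $\le c\,k^{2r}$ using $\|\varphi_{l_r}\|_C = \sqrt{l_r+1/2}$, and then apply H\"older in $\ell_p$ to the resulting $\sum_{k=r}^N k^{2r+1/2}|\xi_k|$, treating the endpoints $p=1,\infty$ separately. The paper's own argument is identical in structure, with the endpoint cases also handled by remark.
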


\begin{proof}
Using (\ref{dif_ort}), we write down the representation
$$
\mathcal{D}^{(r)}_N f(t) - \mathcal{D}^{(r)}_N f^\delta(t)
= \sum_{k=r}^N  \langle f-f^\delta, \varphi_{k} \rangle \varphi_{k}^{(r)}(t)
= 2 \sum_{k=r}^N \sqrt{k+1/2}\, \langle f-f^\delta, \varphi_{k} \rangle
\mathop{{\sum}^*}\limits_{l_1=0}^{k-1} \varphi_{l_1}^{(r-1)}(t)
$$
$$
	  \cdots =2^r \sum_{k=r}^N \sqrt{k+1/2}\, \langle f-f^\delta, \varphi_{k} \rangle
	\mathop{{\sum}^*}\limits_{l_1=r-1}^{k-1} (l_1+1/2) \mathop{{\sum}^*}\limits_{l_2=r-2}^{l_1-1} (l_2+1/2)
$$
\begin{equation}  \label{second_dif}
    \ldots
	\mathop{{\sum}^*}\limits_{l_{r-1}=1}^{l_{r-2}-1} (l_{r-1}+1/2)
	\mathop{{\sum}^*}\limits_{l_r=0}^{l_{r-1}-1} \sqrt{l_r+1/2} \ \varphi_{l_r}(t) .
\end{equation}
Let $1< p<\infty$ first. Then, using the H\"{o}lder inequality, we find
$$
\|\mathcal{D}^{(r)}_N f - \mathcal{D}^{(r)}_N f^\delta\|_{C}
\leq 2^r \sum_{k=r}^N \sqrt{k+1/2}\, |\langle f-f^\delta, \varphi_{k} \rangle |
\mathop{{\sum}}\limits_{l_1=r-1}^{k-1} (l_1+1/2) \mathop{{\sum}}\limits_{l_2=r-2}^{l_1-1} (l_2+1/2)
$$
$$
\ldots \mathop{{\sum}}\limits_{l_{r-1}=1}^{l_{r-2}-1} (l_{r-1}+1/2)\mathop{{\sum}}\limits_{l_r=0}^{l_{r-1}-1} (l_r+1/2)
$$
$$
\leq c\, \sum_{k=r}^N k^{2r+1/2}\, |\langle f-f^\delta, \varphi_{k} \rangle |
\leq c\, \left(\sum_{k=r}^{N}\,|\xi_k|^p\right)^{1/p}
\left(\sum_{k=r}^{N}\ k^{(2r+1/2)\frac{p}{p-1}}\right)^{(p-1)/p}
$$
$$
\leq c \delta N^{2r-1/p+3/2} ,
$$
which was required to prove.

In the cases of $p=1$ and $p=\infty$, the assertion of Lemma is proved similarly. \vspace{0.1in}
\end{proof}

The combination of Lemmas \ref{lemma_BoundErrHCC} and \ref{lemma_BoundPertHCC} gives

\begin{theorem} \label{Th2}
	Let $f\in W^\mu_{s}$, $1\leq s< \infty$, $\mu>2r-1/s+3/2$, and let the condition (\ref{perturbation2}) be satisfied for $1\le p\le \infty$. Then for $N\asymp \delta^{-\frac{1}{\mu-1/p+1/s}}$ it holds
	$$
	\|f^{(r)}-\mathcal{D}^{(r)}_N f^\delta\|_C \leq c \delta^{\frac{\mu-2r+1/s-3/2}{\mu-1/p+1/s}}.
	$$
\end{theorem}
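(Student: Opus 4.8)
The plan is to combine the two lemmas through the error decomposition (\ref{fullError}) and then optimize the truncation level $N$. First I would apply the triangle inequality in the uniform metric to (\ref{fullError}), which yields
$$
\|f^{(r)}-\mathcal{D}^{(r)}_N f^\delta\|_C \leq \|f^{(r)}-\mathcal{D}^{(r)}_N f\|_C + \|\mathcal{D}^{(r)}_N f - \mathcal{D}^{(r)}_N f^\delta\|_C .
$$
For the first summand I invoke Lemma \ref{lemma_BoundErrHCC}; since $f$ is taken from the unit ball $W^\mu_s$ we have $\|f\|_{s,\mu}\le 1$, so this term is bounded by $c\,N^{-\mu+2r-1/s+3/2}$. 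For the second summand I invoke Lemma \ref{lemma_BoundPertHCC}, which under (\ref{perturbation2}) gives the bound $c\,\delta\,N^{2r-1/p+3/2}$. Adding these produces the two-term estimate
$$
\|f^{(r)}-\mathcal{D}^{(r)}_N f^\delta\|_C \leq c\left(N^{-\mu+2r-1/s+3/2} + \delta\, N^{2r-1/p+3/2}\right),
$$
in which the first term (the truncation error) decreases in $N$ while the second (the propagated data error) grows in $N$.

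The key remaining step is to choose $N$ so as to balance the two competing contributions. I would equate the two exponential factors, i.e. set $N^{-\mu+2r-1/s+3/2} \asymp \delta\,N^{2r-1/p+3/2}$, and solve for $N$. Cancelling the common factor $N^{2r+3/2}$ reduces the balancing condition to $N^{-\mu-1/s+1/p}\asymp\delta$, whence $N\asymp\delta^{-1/(\mu-1/p+1/s)}$, exactly the value announced in the statement. Substituting this $N$ back into either term then gives the error of order $\delta^{(\mu-2r+1/s-3/2)/(\mu-1/p+1/s)}$.

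Before substituting I would record two sanity checks that make the optimization legitimate. The denominator $\mu-1/p+1/s$ is strictly positive because the hypothesis $\mu>2r-1/s+3/2$ forces $\mu$ to be large (at least $7/2-1/s$ when $r\ge1$), so the exponent $-1/(\mu-1/p+1/s)$ is negative and $N\to\infty$ as $\delta\to0$; in particular the chosen $N$ is admissible and large for small $\delta$. The same hypothesis makes the numerator $\mu-2r+1/s-3/2$ positive, so the resulting exponent of $\delta$ is positive and the bound does tend to zero. Since the theorem is stated with the $\asymp$ relation for $N$, rounding the optimal real value to an integer only affects the absolute constants and is absorbed into $c$.

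I do not expect a genuine obstacle here, as the analytic difficulty is already carried by the two lemmas; the one point requiring care is purely algebraic, namely correctly cancelling the shared $N^{2r+3/2}$ factor in the balancing equation so that the rate in $\delta$ comes out as claimed.
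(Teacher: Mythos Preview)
Your proposal is correct and follows essentially the same approach as the paper: apply the triangle inequality to (\ref{fullError}), bound the two pieces by Lemmas \ref{lemma_BoundErrHCC} and \ref{lemma_BoundPertHCC}, factor out $N^{2r+3/2}$, and substitute $N\asymp \delta^{-1/(\mu-1/p+1/s)}$. The additional sanity checks you include about the sign of the exponents are not in the paper's proof but are harmless and helpful.
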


\begin{proof}
Taking into account Lemmas \ref{lemma_BoundErrHCC}, \ref{lemma_BoundPertHCC}, from (\ref{fullError}) we get
$$
\|f^{(r)} - \mathcal{D}^{(r)}_N f^\delta\|_C \leq \|f^{(r)}-\mathcal{D}_N^{(r)} f\|_C +
\|\mathcal{D}^{(r)}_N f - \mathcal{D}^{(r)}_N f^\delta\|_C
$$
$$
\leq c\|f\|_{s,\mu} N^{-\mu+2r-1/s+3/2} + c\delta N^{2r-1/p+3/2}
= cN^{2r+3/2} \left(N^{-\mu-1/s} + \delta N^{-1/p}\right).
$$
Substituting the rule $N\asymp \delta^{-\frac{1}{\mu-1/p+1/s}}$ into the relation above completely proves Theorem.
\end{proof}

In what follows, by $\card([r,N])$ we shall understand the number of points with integer indexes that belong to the interval $[r,N]$.

\begin{corollary} \label{Cor2}
In the considered problem, the truncation method $\mathcal{D}^{(r)}_{N}$ (\ref{ModVer})
achieves the accuracy
	$$
	O\left(\delta^{\frac{\mu-2r+1/s-3/2}{\mu-1/p+1/s}}\right)
	$$
	on the class $W^{\mu}_{s}$, $\mu>2r-1/s+3/2$, and requires
	$$
	\card([r,N]) \asymp
	N \asymp \delta^{-\frac{1}{\mu-1/p+1/s}}
	$$
	perturbed Fourier-Legendre coefficients.
\end{corollary}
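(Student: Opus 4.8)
The plan is to treat this corollary as a repackaging of Theorem \ref{Th2} together with a direct count of the data consumed by the method. First I would simply invoke Theorem \ref{Th2}: under the stated hypotheses $f \in W^\mu_s$, $\mu > 2r - 1/s + 3/2$, and the perturbation bound (\ref{perturbation2}) for $1\le p\le\infty$, the choice $N \asymp \delta^{-\frac{1}{\mu-1/p+1/s}}$ yields precisely the estimate $\|f^{(r)} - \mathcal{D}^{(r)}_N f^\delta\|_C \leq c\,\delta^{\frac{\mu-2r+1/s-3/2}{\mu-1/p+1/s}}$. This establishes verbatim the claimed accuracy $O\big(\delta^{\frac{\mu-2r+1/s-3/2}{\mu-1/p+1/s}}\big)$ on the class $W^\mu_s$, so the first assertion requires no further work.

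The second assertion is purely a matter of counting. Inspecting the definition (\ref{ModVer}) of $\mathcal{D}^{(r)}_N f^\delta$, the summation index $k$ runs over $r, r+1, \ldots, N$, so the method uses exactly the perturbed Fourier--Legendre coefficients $\langle f^\delta, \varphi_k\rangle$ for these indices; their number is $\card([r,N]) = N - r + 1$. It then remains to verify the chain $\card([r,N]) \asymp N \asymp \delta^{-\frac{1}{\mu-1/p+1/s}}$. I would note that the denominator $\mu - 1/p + 1/s$ is positive under the hypotheses, since $1/p \le 1$ and $\mu > 2r - 1/s + 3/2 \ge 7/2 - 1/s$ force $\mu - 1/p + 1/s > 0$; hence the exponent is negative and $N \to \infty$ as $\delta \to 0$. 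Because $r$ is a fixed integer while $N$ grows without bound, the fixed shift is asymptotically negligible, giving $N - r + 1 \asymp N$ (with absolute constants, e.g.\ $1/2$ and $1$, valid once $N \ge 2(r-1)$), and combining with the choice of $N$ completes the equivalences.

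There is no genuine obstacle here: the corollary is a bookkeeping consequence of the theorem. The only point demanding a moment's care is confirming that $N \to \infty$ as $\delta\to 0$ (equivalently, that the denominator of the exponent is strictly positive on the admissible parameter range), which is exactly what licenses replacing $\card([r,N]) = N-r+1$ by $N$ in the asymptotic count.
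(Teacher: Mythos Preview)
Your proposal is correct and matches the paper's approach: the paper states Corollary~\ref{Cor2} as an immediate consequence of Theorem~\ref{Th2} without separate proof, and your argument simply unpacks that implication by invoking the theorem for the accuracy bound and doing the straightforward count $\card([r,N])=N-r+1\asymp N$. The extra care you take in checking $\mu-1/p+1/s>0$ is a nice touch but not something the paper spells out.
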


\section{Error estimate for truncation method in $L_2$--metric}

Now we have to bound the error of the method (\ref{ModVer}) in the metric of $L_2$. An upper estimate for the norm of the first difference on the right-hand side of (\ref{fullError}) is contained in the following statement.

\begin{lemma}\label{lemma_BoundErrHC}
	Let $f\in W^\mu_{s}$, $1\leq s< \infty$, $\mu>2r-1/s+1/2$. Then it holds
	$$
	\|f^{(r)}-\mathcal{D}_N^{(r)} f\|_2 \leq c\|f\|_{s,\mu} N^{-\mu+2r-1/s+1/2}.
	$$
\end{lemma}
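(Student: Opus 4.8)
The plan is to mirror the structure of the proof of Lemma~\ref{lemma_BoundErrHCC}, but to exploit the orthonormality of the Legendre system in $L_2$ rather than the pointwise bound $\|\varphi_k\|_C=\sqrt{k+1/2}$. Starting from the representation \eqref{r_deriv}, the tail $f^{(r)}-\mathcal{D}_N^{(r)}f=\sum_{k=N+1}^\infty\langle f,\varphi_k\rangle\varphi_k^{(r)}$ is the key object. The crucial structural fact is that each derivative $\varphi_k^{(r)}$, after iterating the differentiation formula \eqref{dif_ort}, is a finite linear combination of the $\varphi_{l_r}$ with $0\le l_r\le k-1$. The fundamental difference from the uniform case is that in $L_2$ we may keep the expansion in the orthonormal basis and use $\|\varphi_{l_r}\|_2=1$ together with Parseval, instead of summing the sup-norms.

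First I would obtain a clean bound $\|\varphi_k^{(r)}\|_2\le c\,k^{2r-1/2}$. This comes directly from iterating \eqref{dif_ort}: the leading factor is $2^r\sqrt{k+1/2}$, each intermediate nested sum $\sum_{l_j}^*(l_j+1/2)$ contributes a factor bounded by $c\,k^2$ (there are $r-1$ such factors, giving $k^{2(r-1)}$), the innermost sum over $\sqrt{l_r+1/2}$ contributes another factor $c\,k^{3/2}$, and orthonormality collapses the resulting combination of basis elements when measured in $L_2$. Balancing the exponents $1/2+2(r-1)+3/2=2r$ against the orthonormality that removes one power, or more carefully tracking the $\ell_2$ norm of the coefficient vector of $\varphi_k^{(r)}$ in the $\{\varphi_{l_r}\}$ basis, yields the stated $k^{2r-1/2}$. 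This is exactly the exponent needed: it is one-half power lower than the $k^{2r+1/2}$ appearing in Lemma~\ref{lemma_BoundErrHCC}, reflecting the transition from the $C$-metric to the $L_2$-metric.

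Next I would estimate the $L_2$ norm of the tail. By the triangle inequality $\|f^{(r)}-\mathcal{D}_N^{(r)}f\|_2\le\sum_{k=N+1}^\infty|\langle f,\varphi_k\rangle|\,\|\varphi_k^{(r)}\|_2\le c\sum_{k=N+1}^\infty k^{2r-1/2}|\langle f,\varphi_k\rangle|$. From here the argument is identical in form to Lemma~\ref{lemma_BoundErrHCC}: for $1<s<\infty$ I insert the weight $k^\mu$, factor as $k^\mu|\langle f,\varphi_k\rangle|\cdot k^{2r-1/2-\mu}$, and apply H\"older with exponents $s$ and $s/(s-1)$. The first factor gives $\|f\|_{s,\mu}$ by the definition of $W_s^\mu$, and the geometric-type tail sum $\bigl(\sum_{k>N}k^{(2r-1/2-\mu)s/(s-1)}\bigr)^{(s-1)/s}$ converges precisely under the hypothesis $\mu>2r-1/s+1/2$, producing the factor $N^{-\mu+2r-1/s+1/2}$. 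The case $s=1$ is handled by pulling out $\sup_{k>N}k^{2r-1/2-\mu}\asymp N^{2r-1/2-\mu}$ and bounding the remaining sum by $\|f\|_{1,\mu}$.

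The main obstacle is the first step: rigorously justifying the bound $\|\varphi_k^{(r)}\|_2\le c\,k^{2r-1/2}$ with a constant independent of $k$, since the naive termwise estimate used in the $C$-metric proof throws away the orthogonality that is essential here. One must be careful that the nested restricted sums $\mathop{{\sum}^*}$ over indices of a fixed parity do not produce spurious logarithmic or extra polynomial factors, and that the collapse via Parseval is applied to the genuine coefficient vector of $\varphi_k^{(r)}$ rather than to a crude upper bound on each coefficient. Once this estimate is secured, the remainder of the proof is routine and parallels Lemma~\ref{lemma_BoundErrHCC} verbatim.
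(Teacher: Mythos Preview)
Your proposal is correct and yields the stated bound, but it proceeds by a genuinely different, more elementary route than the paper. The paper does \emph{not} bound each $\|\varphi_k^{(r)}\|_2$ separately; instead it interchanges the order of summation in the nested expansion so that the entire tail $f^{(r)}-\mathcal{D}_N^{(r)}f$ is written as a single orthogonal expansion $\sum_{l_r}c_{l_r}\varphi_{l_r}$, with $c_{l_r}$ a sum over $k$ involving the quantities $B_k^r\le c\,k^{2r-2}$. Parseval is then applied once to this global expansion, and H\"older is applied to the inner $k$-sum, with the range of $l_r$ split at $N-r+1$. Your approach instead applies the triangle inequality in $k$ at the outset, loses any cross-term interaction, and then uses Parseval only to bound each individual $\|\varphi_k^{(r)}\|_2$. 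The trade-off: the paper's argument is in principle sharper (no triangle inequality across $k$) and produces the same combinatorial machinery that is reused in Lemma~\ref{lemma_BoundPertHC}, while your argument is shorter and isolates the single reusable estimate $\|\varphi_k^{(r)}\|_2\le c\,k^{2r-1/2}$. For the present lemma both give the identical order.

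On the point you flag as the main obstacle: the bound $\|\varphi_k^{(r)}\|_2\le c\,k^{2r-1/2}$ is indeed true, and the cleanest way to see it is exactly the one the paper's structure suggests. The coefficient of $\varphi_{l_r}$ in $\varphi_k^{(r)}$ is $2^r\sqrt{k+1/2}\,\sqrt{l_r+1/2}\,B_k^r$ with $B_k^r\le c\,k^{2r-2}$, so $|a_{l_r}|^2\le c\,l_r\,k^{4r-3}$; summing over $l_r\le k-r$ gives $\|\varphi_k^{(r)}\|_2^2\le c\,k^{4r-1}$. Your heuristic exponent count (``removes one power'') is off by a half-power in its phrasing, but the conclusion and the mechanism are right. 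Once this is in hand, the H\"older step you describe goes through verbatim.
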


\begin{proof}
We take the representation
$$
f^{(r)}(t)-\mathcal{D}^{(r)}_N f(t)
= 2^r \sum_{k=N+1}^\infty \sqrt{k+1/2}\,\langle f, \varphi_k\rangle
\mathop{{\sum}^*}\limits_{l_1=r-1}^{k-1} (l_1+1/2) \mathop{{\sum}^*}\limits_{l_2=r-2}^{l_1-1} (l_2+1/2) \cdots
\mathop{{\sum}^*}\limits_{l_r=0}^{l_{r-1}-1} \sqrt{l_r+1/2}\,\varphi_{l_r}(t)
$$
and change the order of summation:
\begin{align*}
&f^{(r)}(t)-\mathcal{D}^{(r)}_N f(t)=
2^r \sum_{k=N+1}^\infty \sqrt{k+1/2}\langle f, \varphi_k\rangle \mathop{{\sum}^*}\limits_{l_1=r-1}^{k-1} (l_1+1/2) \, \\[10pt]
&\ldots \mathop{{\sum}^*}_{l_{r-1}=1}^{l_{r-2}-1} (l_{r-1} + 1/2) \mathop{{\sum}^*}_{l_r=0}^{l_{r-1}-1} \sqrt{l_r + 1/2}\ \varphi_{l_r}(t) = 2^r \sum_{k=N+1}^\infty \sqrt{k+1/2}\,\langle f,\varphi_k\rangle \\[10pt]
&\ldots \mathop{{\sum}^*}_{l_{r-2}=2}^{l_{r-3}-1} (l_{r-2} + 1/2) \mathop{{\sum}^*}_{l_{r}=0}^{l_{r-2}-1} \sqrt{l_r + 1/2}\ \varphi_{l_r}(t) \mathop{{\sum}^*}_{l_{r-1}=l_r+1}^{l_{r-2}-1}(l_{r-1} + 1/2) \\[10pt]
&= \ldots = 2^r \sum_{k=N+1}^{\infty} \sqrt{k + 1/2}\,\langle f, \,\varphi_k\rangle \mathop{{\sum}^*}_{l_1=r-1}^{k-1} (l_1 + 1/2) \mathop{{\sum}^*}_{l_r=0}^{l_1-r+1} \sqrt{l_r + 1/2}\,\varphi_{l_r}(t)\mathop{{\sum}^*}_{l_2=l_r+r-2}^{l_1-1} (l_2 + 1/2)  \\
& \ldots \mathop{{\sum}^*}_{l_{r-2}=l_r+2}^{l_{r-3}-1} (l_{r-2} + 1/2) \mathop{{\sum}^*}_{l_{r-1}=l_r+1}^{l_{r-2}-1} (l_{r-1} + 1/2) \\[10pt]
&= 2^r \sum_{k=N+1}^{\infty} \sqrt{k + 1/2}\,\langle f, \varphi_k\rangle \mathop{{\sum}^*}_{l_r=0}^{k-r} \sqrt{l_r + 1/2} \ \varphi_{l_r}(t) \mathop{{\sum}^*}_{l_1=l_r+r-1}^{k-1}(l_1+1/2) \mathop{{\sum}^*}_{l_2=l_r+r-2}^{l_1-1} (l_2 + 1/2)  \\
& \ldots \mathop{{\sum}^*}_{l_{r-2}=l_r+2}^{l_{r-3}-1} (l_{r-2} + 1/2) \mathop{{\sum}^*}_{l_{r-1}=l_r+1}^{l_{r-2}-1} (l_{r-1} + 1/2).
\end{align*}
By interchanging the order of summation with respect to $k$ and $l_r$, we obtain
$$
f^{(r)}(t)-\mathcal{D}^{(r)}_N f(t) = 2^r \mathop{{\sum}^*}_{l_r=0}^{N-r+1} \sqrt{l_r+1/2} \ \varphi_{l_r}(t)
\sum_{k=N+1}^\infty \sqrt{k+1/2}\langle f, \varphi_k\rangle B_k^r
$$
$$
+ 2^r \mathop{{\sum}^*}_{l_r=N-r+2}^{\infty} \sqrt{l_r+1/2} \ \varphi_{l_r}(t)
\sum_{k=l_r+r}^\infty \sqrt{k+1/2}\langle f, \varphi_k\rangle B_k^r ,
$$
where $B^1_k=1$ and for r= 2, 3,\dots
$$
B^r_{k}:=\mathop{{\sum}^*}\limits_{l_1=l_r+r-1}^{k-1} (l_1+1/2) \mathop{{\sum}^*}\limits_{l_2=l_r+r-2}^{l_1-1}
(l_2+1/2) \ldots \mathop{{\sum}^*}\limits_{l_{r-2}=l_r+2}^{l_{r-3}-1}  (l_{r-2}+1/2)\,
\mathop{{\sum}^*}\limits_{l_{r-1}=l_r+1}^{l_{r-2}-1}  (l_{r-1}+1/2) .
$$
Easy to see that
\begin{equation}   \label{Br}
B^r_{k} = c^* k^{2r-2} ,
\end{equation}
where the factor $c^*$ does not depend on $N$, $k$.

At first, we consider the case $1<s<\infty$. Using Hölder inequality and the definition of $W^\mu_s$, for $\mu>2r-1/s+1/2$ we get
$$
\|f^{(r)}-\mathcal{D}^{(r)}_N f\|^2_{2}
= 2^{2r}\, \mathop{{\sum}^*}_{l_{r}=0}^{N-r+1} (l_r+1/2)
\left(\sum_{k=N+1}^ \infty \sqrt{k+1/2}\, \langle f, \varphi_{k} \rangle B^r_{k}\right)^2
$$
$$
+ 2^{2r}\, \mathop{{\sum}^*}_{l_{r}=N-r+2}^{\infty}(l_r+1/2)
\left(\sum_{k=l_r+r}^\infty \sqrt{k+1/2}\, \langle f, \varphi_{k} \rangle B^r_{k}\right)^2
$$
$$
\leq c \mathop{{\sum}}\limits_{l_{r}=0}^{N-r+1} (l_r+1/2)
\left(\sum_{k=N+1}^\infty k^{s\mu}\, |\langle f, \varphi_{k} \rangle |^s\right)^{2/s}
\left(\sum_{k=N+1}^\infty k^{(2r-\mu-3/2)\frac{s}{s-1}}\right)^{2(s-1)/s}
$$
$$
+ c\, \mathop{{\sum}}\limits_{l_{r}=N-r+2}^{\infty} (l_r+1/2)
\left(\sum_{k=l_r+r}^\infty k^{s\mu}\, |\langle f, T_{k} \rangle |^s\right)^{2/s}
\left(\sum_{k=l_r+r}^\infty \, k^{(2r-\mu-3/2)\frac{s}{s-1}}\right)^{2(s-1)/s}
$$
$$
\leq c \|f\|_{s,\mu}^2 N^{-2(\mu-2r+3/2)+\frac{2(s-1)}{s}+2}.
$$
Thus, we find
$$
\|f^{(r)}-\mathcal{D}^{(r)}_N f\|_2 \leq c\|f\|_{s,\mu} N^{-\mu+2r-1/s+1/2}.
$$
In the case of $s = 1$ the assertion of Lemma is proved similarly.
\end{proof}

\begin{lemma}\label{lemma_BoundPertHC}
Let the condition (\ref{perturbation2}) be satisfied for $1\le p\le \infty$.
Then for arbitrary function $f\in L_2$ it holds
	$$
	\|\mathcal{D}^{(r)}_N f - \mathcal{D}^{(r)}_N f^\delta\|_2 \leq c\delta N^{2r-1/p+1/2}.
	$$
\end{lemma}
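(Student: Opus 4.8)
The plan is to follow the same structure as the proof of Lemma \ref{lemma_BoundErrHC}, but exploiting the orthonormality of the Legendre system (Parseval's identity) in place of the uniform bound $\|\varphi_{l_r}\|_C=\sqrt{l_r+1/2}$ that was used in the $C$-metric argument; this is exactly what accounts for the gain of one in the exponent of $N$ relative to Lemma \ref{lemma_BoundPertHCC}. First I would apply the differentiation formula (\ref{dif_ort}) $r$ times to the finite sum
$$
\mathcal{D}^{(r)}_N f(t)-\mathcal{D}^{(r)}_N f^\delta(t)=\sum_{k=r}^N \xi_k\,\varphi_k^{(r)}(t),
$$
obtaining the nested starred-sum representation, and then interchange the order of summation exactly as in Lemma \ref{lemma_BoundErrHC} so as to collect everything into a single Fourier--Legendre expansion in $\varphi_{l_r}$. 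Because the outer index $k$ here runs only up to $N$, no splitting into two ranges is needed, and one is led to
$$
\mathcal{D}^{(r)}_N f(t)-\mathcal{D}^{(r)}_N f^\delta(t)=2^r \mathop{{\sum}^*}\limits_{l_r=0}^{N-r}\sqrt{l_r+1/2}\,\varphi_{l_r}(t)\sum_{k=l_r+r}^N \sqrt{k+1/2}\,\xi_k\,B^r_k,
$$
with $B^r_k$ the same coefficient as in Lemma \ref{lemma_BoundErrHC}.

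Next I would invoke Parseval's identity to write
$$
\|\mathcal{D}^{(r)}_N f-\mathcal{D}^{(r)}_N f^\delta\|_2^2=2^{2r}\mathop{{\sum}^*}\limits_{l_r=0}^{N-r}(l_r+1/2)\left(\sum_{k=l_r+r}^N \sqrt{k+1/2}\,\xi_k\,B^r_k\right)^2,
$$
and then substitute the identity (\ref{Br}), namely $B^r_k=c^*k^{2r-2}$, together with $\sqrt{k+1/2}\asymp k^{1/2}$, so that the inner factor is controlled by $\sum_k k^{2r-3/2}|\xi_k|$.

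For $1<p<\infty$ I would apply Hölder's inequality in the variable $k$, bounding $\big(\sum_k|\xi_k|^p\big)^{1/p}\le\delta$ by (\ref{perturbation2}) and estimating the complementary power sum $\sum_{k\le N}k^{(2r-3/2)p/(p-1)}\asymp N^{(2r-3/2)p/(p-1)+1}$ (the exponent being positive since $r\ge 1$). This makes the inner factor at most $c\delta N^{2r-1/2-1/p}$, \emph{uniformly in} $l_r$; squaring and summing against the outer factor $\mathop{{\sum}^*}\limits_{l_r\le N-r}(l_r+1/2)\asymp N^2$ then gives $\|\cdot\|_2^2\le c\delta^2 N^{4r+1-2/p}$, and taking the square root yields the claimed bound $c\delta N^{2r-1/p+1/2}$. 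The main obstacle is essentially bookkeeping: carrying out the interchange of summation order for the $r$-fold nested starred sums correctly (which I would inherit from Lemma \ref{lemma_BoundErrHC}) and verifying that the Hölder estimate of the inner $k$-sum is independent of $l_r$, so that it factors cleanly out of the outer summation over $l_r$. The boundary cases $p=1$ and $p=\infty$ follow by the same computation with the usual modifications of Hölder's inequality.
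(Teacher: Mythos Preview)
Your proposal is correct and follows essentially the same approach as the paper: the paper likewise takes the representation (\ref{second_dif}), interchanges the order of summation to obtain the single Fourier--Legendre expansion in $\varphi_{l_r}$ with coefficient $B^r_k$, applies Parseval, and then uses H\"older on the inner $k$-sum together with $\sum_{l_r\le N-r}(l_r+1/2)\asymp N^2$ to reach $c\,\delta^2 N^{2(2r-3/2)+2(p-1)/p+2}=c\,\delta^2 N^{4r+1-2/p}$. Your observation that the H\"older bound on the inner sum is uniform in $l_r$ is exactly what the paper uses implicitly in its last displayed estimate.
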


\begin{proof}
We take the representation (\ref{second_dif}) and change the order of summation:

\begin{align*}
&\mathcal{D}^{(r)}_N f(t) - \mathcal{D}^{(r)}_N f^\delta(t)=
2^r \sum_{k=r}^N \sqrt{k+1/2}\langle f-f^\delta, \varphi_k\rangle
\mathop{{\sum}^*}_{l_1=r-1}^{k-1} (l_1+1/2) \mathop{{\sum}^*}_{l_2=r-2}^{l_1-1} (l_2+1/2) \cdots
\mathop{{\sum}^*}_{l_r=0}^{l_{r-1}-1} \sqrt{l_r+1/2}\varphi_{l_r}(t) \\[10pt]
&= 2^r \sum_{k=r}^N \sqrt{k+1/2}\langle f-f^\delta,\varphi_k\rangle \dots \mathop{{\sum}^*}_{l_{r-2}=2}^{l_{r-3}-1} (l_{r-2} + 1/2) \mathop{{\sum}^*}_{l_{r}=0}^{l_{r-2}-2} \sqrt{l_r + 1/2}\ \varphi_{l_r}(t) \mathop{{\sum}^*}_{l_{r-1}=l_r+1}^{l_{r-2}-1}(l_{r-1} + 1/2)  \\[10pt]
&= \dots = 2^r \sum_{k=r}^{N} \sqrt{k + 1/2}\langle f-f^\delta, \varphi_k\rangle \mathop{{\sum}^*}_{l_1=r-1}^{k-1} (l_1 + 1/2) \mathop{{\sum}^*}_{l_r=0}^{l_1-r+1} \sqrt{l_r + 1/2}\varphi_{l_r}(t)\mathop{{\sum}^*}_{l_2=l_r+r-2}^{l_1-1} (l_2 + 1/2)  \\
& \dots \mathop{{\sum}^*}_{l_{r-2}=l_r+2}^{l_{r-3}-1} (l_{r-2} + 1/2) \mathop{{\sum}^*}_{l_{r-1}=l_r+1}^{l_{r-2}-1} (l_{r-1} + 1/2) \\[10pt]
&= 2^r \sum_{k=r}^{N} \sqrt{k + 1/2}\langle f-f^\delta, \varphi_k\rangle \mathop{{\sum}^*}_{l_r=0}^{k-r} \sqrt{l_r + 1/2} \ \varphi_{l_r}(t) \mathop{{\sum}^*}_{l_1=l_r+r-1}^{k-1}(l_1+1/2) \mathop{{\sum}^*}_{l_2=l_r+r-2}^{l_1-1} (l_2 + 1/2)  \\
& \dots \mathop{{\sum}^*}_{l_{r-2}=l_r+2}^{l_{r-3}-1} (l_{r-2} + 1/2) \mathop{{\sum}^*}_{l_{r-1}=l_r+1}^{l_{r-2}-1} (l_{r-1} + 1/2).
\end{align*}

As a result we have
$$
\mathcal{D}^{(r)}_N f(t) - \mathcal{D}^{(r)}_N f^\delta(t)
= 2^r \mathop{{\sum}^*}_{l_r=0}^{N-r} \sqrt{l_r+1/2} \  \varphi _{l_r}(t)
\, \sum_{k=l_r+r}^N \sqrt{k+1/2} \, \langle f-f^\delta, \varphi_{k} \rangle  B^r_k .
$$

At first, we consider the case $1<p<\infty$. Using Hölder inequality, we get
$$
\|\mathcal{D}^{(r)}_N f - \mathcal{D}^{(r)}_N f^\delta\|_{2}^2
\le 2^{2r} \mathop{{\sum}}\limits_{l_r=0}^{N-r}
\, (l_r+1/2) \left(\sum_{k=l_r+r}^N \sqrt{k+1/2}\, \,|\langle f-f^\delta, \varphi_{k} \rangle| B^r_k\right)^2
$$
$$
\leq c\, \mathop{{\sum}}\limits_{l_r=0}^{N-r} (l_r+1/2)
\, \left(\sum_{k=l_r+r}^N |\xi_k|^p\right)^{2/p}
\, \left(\sum_{k=l_r+r}^N k^{(2r-3/2)\frac{p}{p-1}}\right)^{2(p-1)/p}
$$
$$
\leq c\, \delta^2 N^{2(2r-3/2)+2(p-1)/p+2}.
$$
In the case of $p = 1$ the assertion of Lemma is proved similarly.
\end{proof}

The combination of Lemmas \ref{lemma_BoundErrHC} and \ref{lemma_BoundPertHC} gives

\begin{theorem} \label{Th1}
Let $f\in W^\mu_{s}$, $1\leq s< \infty$, $\mu>2r-1/s+1/2$, and let the condition (\ref{perturbation2}) be satisfied for $1\le p\le \infty$. Then for $N\asymp \delta^{-\frac{1}{\mu-1/p+1/s}}$ it holds
	$$
	\|f^{(r)} - \mathcal{D}^{(r)}_N f^\delta\|_2 \leq c \delta^{\frac{\mu-2r+1/s-1/2}{\mu-1/p+1/s}}.
	$$
\end{theorem}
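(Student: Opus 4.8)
The plan is to follow the template of the proof of Theorem \ref{Th2} verbatim, merely substituting the $L_2$-metric lemmas for their uniform-metric counterparts. First I would apply the triangle inequality to the error decomposition (\ref{fullError}), splitting the left-hand side into the truncation term $\|f^{(r)}-\mathcal{D}_N^{(r)} f\|_2$ and the propagated-noise term $\|\mathcal{D}^{(r)}_N f - \mathcal{D}^{(r)}_N f^\delta\|_2$.

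Next I would bound these two pieces directly by Lemma \ref{lemma_BoundErrHC} and Lemma \ref{lemma_BoundPertHC}, respectively, arriving at
$$
\|f^{(r)} - \mathcal{D}^{(r)}_N f^\delta\|_2 \leq c\|f\|_{s,\mu}\,N^{-\mu+2r-1/s+1/2} + c\,\delta\,N^{2r-1/p+1/2}.
$$
Since $f$ lies in the unit ball $W^\mu_s$, the factor $\|f\|_{s,\mu}$ is absorbed into the constant. Factoring out the common power $N^{2r+1/2}$ then gives
$$
\|f^{(r)} - \mathcal{D}^{(r)}_N f^\delta\|_2 \leq c\,N^{2r+1/2}\left(N^{-\mu-1/s} + \delta\,N^{-1/p}\right),
$$
which displays the familiar trade-off: the first summand decreases as $N$ grows, while the second increases.

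The decisive step is the optimal selection of the truncation parameter. I would balance the two summands by equating $N^{-\mu-1/s}$ with $\delta\,N^{-1/p}$, i.e.\ solving $N^{-(\mu+1/s-1/p)}=\delta$, which yields precisely the prescribed rule $N\asymp\delta^{-\frac{1}{\mu-1/p+1/s}}$. Substituting this $N$ into either summand produces the common exponent $\frac{\mu-2r+1/s-1/2}{\mu-1/p+1/s}$ of $\delta$ (a short check confirms that the noise term gives $1-\frac{2r-1/p+1/2}{\mu-1/p+1/s}$, which simplifies to the same value), thereby completing the argument.

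I do not anticipate any genuine obstacle: the whole difficulty has already been discharged into Lemmas \ref{lemma_BoundErrHC} and \ref{lemma_BoundPertHC}, whose $L_2$ bounds differ from the uniform ones of Lemmas \ref{lemma_BoundErrHCC}--\ref{lemma_BoundPertHCC} only in that the exponent $3/2$ is replaced by $1/2$, reflecting $\|\varphi_k\|_2=1$ versus $\|\varphi_k\|_C\asymp k^{1/2}$. The hypothesis $\mu>2r-1/s+1/2$ ensures the truncation exponent $-\mu+2r-1/s+1/2$ is negative, so the optimization is well posed and both summands are genuine positive powers of $\delta$.
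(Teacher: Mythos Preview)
Your proposal is correct and matches the paper's approach exactly: the paper states Theorem~\ref{Th1} immediately after the sentence ``The combination of Lemmas~\ref{lemma_BoundErrHC} and \ref{lemma_BoundPertHC} gives'' without writing out a separate proof, so the intended argument is precisely the triangle-inequality-plus-balancing computation you describe, mirroring the proof of Theorem~\ref{Th2} with the exponent $3/2$ replaced by $1/2$.
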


\begin{corollary} \label{Cor1}
In the considered problem, the truncation method $\mathcal{D}^{(r)}_{N}$ (\ref{ModVer})
achieves the accuracy
	$$
	O\left(\delta^{\frac{\mu-2r+1/s-1/2}{\mu-1/p+1/s}}\right)
	$$
	on the class $W^{\mu}_{s}$, $\mu>2r-1/s+1/2$, and requires
	$$
	\card([r,N]) \asymp N \asymp \delta^{-\frac{1}{\mu-1/p+1/s}}
	$$
perturbed Fourier-Legendre coefficients.
\end{corollary}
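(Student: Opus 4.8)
The plan is to read off both assertions directly from Theorem \ref{Th1} together with the explicit form of the method (\ref{ModVer}); no new estimate is required, so the argument is essentially a matter of bookkeeping. The accuracy claim is immediate: under the hypotheses $f \in W^\mu_s$, $\mu > 2r - 1/s + 1/2$, and (\ref{perturbation2}), Theorem \ref{Th1} with the choice $N \asymp \delta^{-1/(\mu-1/p+1/s)}$ furnishes exactly the bound $\|f^{(r)} - \mathcal{D}^{(r)}_N f^\delta\|_2 \leq c\,\delta^{(\mu-2r+1/s-1/2)/(\mu-1/p+1/s)}$, which is precisely the asserted $O(\cdot)$ rate. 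Thus the first half of the corollary needs only to cite the theorem.

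For the cost, I would observe that the approximant $\mathcal{D}^{(r)}_N f^\delta$ in (\ref{ModVer}) is the finite sum $\sum_{k=r}^{N} \langle f^\delta, \varphi_k\rangle \varphi_k^{(r)}$, so evaluating it consumes precisely the perturbed coefficients $\langle f^\delta, \varphi_k\rangle$ for the integer indices $r \le k \le N$. The number of such indices is $\card([r,N]) = N - r + 1$, which is the exact count of Fourier--Legendre data the method requires.

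It then remains to pass from this exact count to the stated asymptotics. The only genuine point is that $N \to \infty$ as $\delta \to 0^+$: the hypothesis $\mu > 2r - 1/s + 1/2$ yields $\mu - 1/p + 1/s > 2r + 1/2 - 1/p \ge 2r - 1/2 > 0$, so the exponent $1/(\mu - 1/p + 1/s)$ is positive and $N \asymp \delta^{-1/(\mu-1/p+1/s)} \to \infty$. Since $r$ is a fixed integer, the additive shift $r-1$ is negligible against $N$, whence $\card([r,N]) = N - r + 1 \asymp N$. Substituting the chosen $N \asymp \delta^{-1/(\mu-1/p+1/s)}$ gives $\card([r,N]) \asymp N \asymp \delta^{-1/(\mu-1/p+1/s)}$, completing the corollary. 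I do not anticipate any real obstacle here; if anything, the one thing worth recording explicitly is the positivity of the exponent, which guarantees that the count grows and so justifies the asymptotic equivalence $\card([r,N]) \asymp N$.
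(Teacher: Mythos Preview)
Your proposal is correct and follows exactly the intended route: the paper states this corollary without proof, as an immediate consequence of Theorem~\ref{Th1} and the definition (\ref{ModVer}), which is precisely what you do. Your explicit verification that $\mu-1/p+1/s>0$ (hence $N\to\infty$ and $\card([r,N])=N-r+1\asymp N$) is a welcome addition the paper leaves implicit.
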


\section{Error estimate in the $L_{q}$--metrics}  \label{L_q}

In the preceding sections, we have established upper bounds for the accuracy of the truncation method $\mathcal{D}^{(r)}_{N}$ (\ref{ModVer}) when measured in the $C$ and $L_{2}$ metrics. Now, we extend our analysis to derive comparable bounds across the complete spectrum of spaces $L_{q}$ for $2\le q\le \infty$.

To facilitate this extension, we introduce several key facts and definitions. A well-established result
(see, for example, Theorem 6.6 \cite{DT}) states that for any $2 \le q \le \infty$ and any algebraic polynomial $P_N$ of degree $N$:
\begin{equation}  \label{P_N}
\|P_N\|_{q} \le c\, N^{(1-\frac{2}{q})}\, \|P_N\|_{2} .
\end{equation}
Let us define the best approximation quantity as
$$
\mathbb{E}_N (f)_{2} = \inf\limits_{P\in\Pi_N} \|f - P\|_{2} ,
$$
where $\Pi_N$ represents the collection of polynomials with degree $\le N$.

Theorem 11.1 in \cite{DT} establishes that for any function $g\in L_{2}$ and $2 < q \le \infty$, the following inequality holds:
\begin{equation}  \label{g}
\|g\|_{q} \le c\,
\left[ \left\{\sum_{k=1}^\infty  k^{(1-\frac{2}{q}) q_1-1}\,\mathbb{E}_k(g)_{2}^{q_1} \right\}^{1/q_1} +
\|g\|_{2} \right] ,
\end{equation}
where $q_1$ is defined as:
$$
q_1 = \begin{cases}
q, & \text{if } q<\infty , \\
1, & \text{if } q=\infty .
\end{cases}
$$

We now present an upper estimate for the first difference term from the right-hand side of equation (\ref{fullError}) in the $L_{q}$ metrics.

\begin{lemma}\label{lemma_Bound1_L_q}
Let $f\in W^\mu_{s}$, $1\leq s< \infty$, $\mu>2r-1/s-2/q+3/2$, $2\leq q\le \infty$.
Then the following bound holds:
$$
\|f^{(r)}-\mathcal{D}_N^{(r)} f\|_{q} \leq c\,
N^{-\mu+2r-1/s-2/q+3/2} .
$$
\end{lemma}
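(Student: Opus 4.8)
The plan is to interpolate between the $L_2$ endpoint, already furnished by Lemma \ref{lemma_BoundErrHC}, and the higher $L_q$ metrics by means of the Ul'yanov-type inequality (\ref{g}). Since the case $q=2$ reduces verbatim to Lemma \ref{lemma_BoundErrHC}, I would restrict attention to $2<q\le\infty$, which is precisely the range in which (\ref{g}) is available. Setting $g:=f^{(r)}-\mathcal{D}_N^{(r)}f$ and writing $\alpha:=\mu-2r+1/s-1/2$ for brevity, the target exponent is $-\alpha+1-2/q$, so everything reduces to feeding good bounds for $\mathbb{E}_m(g)_2$ and $\|g\|_2$ into (\ref{g}).

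The $L_2$ term is immediate: $\|g\|_2\le c\,N^{-\alpha}$ by Lemma \ref{lemma_BoundErrHC}. For the best-approximation quantities I would split on the size of $m$. The operator $\mathcal{D}_M^{(r)}f=\sum_{k=r}^M\langle f,\varphi_k\rangle\varphi_k^{(r)}$ is a polynomial of degree $M-r$, because $\varphi_k^{(r)}$ has degree $k-r$. Consequently, for $m\le N$ I would use the crude bound $\mathbb{E}_m(g)_2\le\|g\|_2\le c\,N^{-\alpha}$, while for $m>N$, since $\mathcal{D}_N^{(r)}f$ is a polynomial of degree $N-r<m$, subtracting it does not alter the best degree-$m$ approximation, so $\mathbb{E}_m(g)_2=\mathbb{E}_m(f^{(r)})_2\le\|f^{(r)}-\mathcal{D}_{m+r}^{(r)}f\|_2\le c\,m^{-\alpha}$, again by Lemma \ref{lemma_BoundErrHC} applied with truncation index $m+r$.

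It then remains to insert these two regimes into the series in (\ref{g}) and split it at $m=N$. For $q<\infty$ (so $q_1=q$) the weight exponent becomes $(1-2/q)q-1=q-3$; the low-frequency block contributes $N^{-\alpha q}\sum_{m\le N}m^{q-3}\asymp N^{q(-\alpha+1-2/q)}$, and the high-frequency block $\sum_{m>N}m^{q-3-\alpha q}$ is a convergent tail of the same order $N^{q(-\alpha+1-2/q)}$. The power-sum estimate for the tail converges exactly when $-\alpha q+q-2<0$, i.e. $\alpha>1-2/q$, which unwinds to $\mu>2r-1/s-2/q+3/2$, precisely the hypothesis. Taking $q$-th roots yields $N^{-\alpha+1-2/q}$, and since $q\ge2$ gives $-\alpha\le-\alpha+1-2/q$, the additive term $\|g\|_2$ is dominated and may be absorbed into the constant. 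The case $q=\infty$ (where $q_1=1$ and the weight exponent equals $0$) follows identically, with $\sum_{m\le N}N^{-\alpha}\asymp N^{1-\alpha}$ and the convergent tail $\sum_{m>N}m^{-\alpha}\asymp N^{1-\alpha}$ requiring $\alpha>1$, i.e. $\mu>2r-1/s+3/2$. The main point requiring care is the degree count for $\mathcal{D}^{(r)}_N f$ together with matching the convergence threshold of the tail sum to the stated smoothness restriction on $\mu$; the inequality (\ref{P_N}) offers an alternative dyadic-block route, but the steps above via (\ref{g}) are the cleanest, the remainder being routine power-sum asymptotics.
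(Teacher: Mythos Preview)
Your proposal is correct and follows essentially the same route as the paper: set $g=f^{(r)}-\mathcal{D}_N^{(r)}f$, invoke Lemma~\ref{lemma_BoundErrHC} for the $L_2$ control, apply the Ul'yanov-type inequality~(\ref{g}), split the series into low- and high-frequency blocks, and bound each block via Lemma~\ref{lemma_BoundErrHC}. The only cosmetic differences are that the paper places the split at $k=N-r$ rather than $m=N$ and takes $P=\mathcal{D}_k^{(r)}f-\mathcal{D}_N^{(r)}f$ directly (degree $k-r\le k$) in the high block, whereas you shift to $\mathcal{D}_{m+r}^{(r)}f$ to hit degree exactly $m$; both choices lead to the same power-sum estimates and the same convergence threshold $\mu>2r-1/s-2/q+3/2$.
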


\begin{proof}
Let us define $g=f^{(r)}-\mathcal{D}_N^{(r)}f$. According to Lemma \ref{lemma_BoundErrHC}, we have:
\begin{equation}   \label{g_2}
\|g\|_{2} = \|f^{(r)}-\mathcal{D}_N^{(r)} f\|_{2} \leq
c\, \|f\|_{s,\mu} N^{-\mu+2r-1/s+1/2} .
\end{equation}

First, consider the case $2<q< \infty$. From (\ref{g}), we derive:
\begin{equation}  \label{g_q}
\|g\|_{q} \le c\,
\left[ \left\{\sum_{k=1}^\infty  k^{q-3}\,\mathbb{E}_k(g)_{2}^{q} \right\}^{1/q} +
\|g\|_{2}\right] .
\end{equation}

We separate this summation by $k$ into two components:
$$
S_1 = \sum_{k=1}^{N-r}  k^{q-3}\,\mathbb{E}_k(g)_{2}^{q} ,
$$
$$
S_2 = \sum_{k=N-r+1}^{\infty}  k^{q-3}\,\mathbb{E}_k(g)_{2}^{q} .
$$

For the polynomial $P$ in $\mathbb{E}_k(g)_{2}$, we select:
\begin{equation}  \label{P_k}
P = \begin{cases}
0, & \text{if } 1\le k \le N-r ,\\
\mathcal{D}_k^{(r)} f - \mathcal{D}_N^{(r)} f, & \text{if } k\ge N-r+1 .
\end{cases}
\end{equation}

Using relation (\ref{g_q}), we can bound $S_1$ as follows:
$$
S_1 \le \sum_{k=1}^{N-r}  k^{q-3}\, \|f^{(r)}-\mathcal{D}_N^{(r)} f\|_{2}^q
$$
$$
\le c\, N^{-(\mu-2r+1/s-1/2)q}\, \sum_{k=1}^{N-r}  k^{q-3}
\le c\, N^{-(\mu-2r+1/s+2/q-3/2)q} .
$$

For $\mu>2r-1/s-2/q+3/2$, we can bound $S_2$ as:
$$
S_2 \le \sum_{k=N-r+1}^{\infty}  k^{q-3}\, \|f^{(r)}-\mathcal{D}_k^{(r)} f\|_{2}^q
$$
$$
\le c\, \sum_{k=N-r+1}^{\infty}  k^{-(\mu-2r+1/s+2/q-3/2)q-1}
\le c\, N^{-(\mu-2r+1/s+2/q-3/2)q} .
$$

Consequently, we have:
\begin{equation}   \label{S_1+S_2}
(S_1+S_2)^{1/q} \le c\, N^{-\mu+2r-1/s-2/q+3/2}
\end{equation}

Substituting (\ref{g_2}) and (\ref{S_1+S_2}) into relation (\ref{g_q}), we obtain:
$$
\|f^{(r)}-\mathcal{D}_N^{(r)} f\|_{q} \leq
c\, (N^{-\mu+2r-1/s-2/q+3/2} + N^{-\mu+2r-1/s+1/2})
= c\, N^{-\mu+2r-1/s-2/q+3/2} .
$$

Now consider the case where $q=\infty$. From inequality (\ref{g}), we have:
\begin{equation}  \label{g_infty}
\|g\|_{\infty} \le c\,
\Big(\bar{S}_1 + \bar{S}_2 + \|g\|_{2}\Big) ,
\end{equation}
where
$$
\bar{S}_1 = \sum_{k=1}^{N-r} \mathbb{E}_k(g)_{2} ,
$$
$$
\bar{S}_2 = \sum_{k=N-r+1}^{\infty} \mathbb{E}_k(g)_{2} .
$$

As before, we use $P$ from (\ref{P_k}) in $\mathbb{E}_k(g)_{2}$.
Using estimation (\ref{g_2}), we derive:
$$
\bar{S}_1 \le \sum_{k=1}^{N-r} \|f^{(r)}-\mathcal{D}_N^{(r)} f\|_{2}
$$
$$
\le c\, N^{-\mu+2r-1/s+1/2}\, \sum_{k=1}^{N-r}  1
= c\, N^{-\mu+2r-1/s+3/2} .
$$
For $\mu>2r-1/s+3/2$, we have
$$
\bar{S}_2 \le \sum_{k=N-r+1}^{\infty}  \|f^{(r)}-\mathcal{D}_k^{(r)} f\|_{2}
$$
$$
\le c\, \sum_{k=N-r+1}^{\infty}  k^{-\mu+2r-1/s+1/2}
= c\, N^{-\mu+2r-1/s+3/2} .
$$
This yields
\begin{equation}   \label{bar(S_1+S_2)}
\bar{S}_1+\bar{S}_2 \le c\, N^{-\mu+2r-1/s+3/2} .
\end{equation}
Substituting (\ref{g_2}) and (\ref{bar(S_1+S_2)}) into inequality (\ref{g_infty}), we obtain
$$
\|f^{(r)}-\mathcal{D}_N^{(r)} f\|_{\infty} \leq
c\, N^{-\mu+2r-1/s+3/2} .
$$
For the case $q=2$, the result was established in Lemma \ref{lemma_BoundErrHC}. \\
Thus, Lemma is fully proven for all applicable values of $q$.
\end{proof}

Next, we provide an estimate for the second difference term from the right-hand side of (\ref{fullError})
in the $L_{q}$-metric.

\begin{lemma}\label{lemma_Bound2_L_q}
Let the condition (\ref{perturbation2}) be satisfied for $1\le p\le \infty$.
Then for any function $f\in L_{2}$ and $2\le q\le\infty$, the following bound holds:
$$
\|\mathcal{D}^{(r)}_N f - \mathcal{D}^{(r)}_N f^\delta\|_{q}
\leq c\,\delta\, N^{2r-1/p-2/q+3/2} .
$$
\end{lemma}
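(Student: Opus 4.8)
The plan is to exploit the fact that, unlike the first difference treated in Lemma \ref{lemma_Bound1_L_q}, the second difference $\mathcal{D}^{(r)}_N f - \mathcal{D}^{(r)}_N f^\delta$ is itself an algebraic polynomial of low degree, so the full strength of the best-approximation inequality (\ref{g}) is not required; the simpler Nikolskii-type inequality (\ref{P_N}) will suffice. Indeed, since $\varphi_k^{(r)}$ has degree $k-r$ and the summation in (\ref{ModVer}) runs only up to $k=N$, the polynomial $\mathcal{D}^{(r)}_N f - \mathcal{D}^{(r)}_N f^\delta = \sum_{k=r}^N \langle f-f^\delta,\varphi_k\rangle \varphi_k^{(r)}$ has degree at most $N-r \le N$.

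First I would record this degree bound and apply (\ref{P_N}) directly to $P_N = \mathcal{D}^{(r)}_N f - \mathcal{D}^{(r)}_N f^\delta$, obtaining, for every $2 \le q \le \infty$ (with the convention $1-2/q=1$ at $q=\infty$),
$$
\|\mathcal{D}^{(r)}_N f - \mathcal{D}^{(r)}_N f^\delta\|_{q} \le c\, N^{1-2/q}\, \|\mathcal{D}^{(r)}_N f - \mathcal{D}^{(r)}_N f^\delta\|_{2} .
$$
Next I would insert the already-established $L_2$ bound from Lemma \ref{lemma_BoundPertHC}, namely $\|\mathcal{D}^{(r)}_N f - \mathcal{D}^{(r)}_N f^\delta\|_{2} \le c\,\delta\, N^{2r-1/p+1/2}$. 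Adding the exponents gives $N^{(1-2/q)+(2r-1/p+1/2)} = N^{2r-1/p-2/q+3/2}$, which is precisely the claimed bound. As a consistency check, the endpoints $q=2$ and $q=\infty$ reproduce Lemmas \ref{lemma_BoundPertHC} and \ref{lemma_BoundPertHCC}, respectively.

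I expect no substantive obstacle here; the only points requiring care are the correct bookkeeping of the degree, so that (\ref{P_N}) applies with the factor $N^{1-2/q}$ (using $N-r\asymp N$ for fixed $r$ to absorb the shift into the constant), and the validity of the endpoint convention at $q=\infty$. The contrast with Lemma \ref{lemma_Bound1_L_q} is instructive: there the first difference is a genuine tail series and hence not a polynomial, forcing the use of the best approximations $\mathbb{E}_k(g)_2$ together with (\ref{g}), whereas here the finite truncation reduces the argument to a one-line application of the Nikolskii inequality combined with the $L_2$ estimate.
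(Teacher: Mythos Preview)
Your proposal is correct and follows essentially the same approach as the paper's own proof: invoke Lemma~\ref{lemma_BoundPertHC} for the $L_2$ bound and then apply the Nikolskii-type inequality (\ref{P_N}) to pass to $L_q$. The additional remarks you make about the degree of the polynomial, the endpoint checks, and the contrast with Lemma~\ref{lemma_Bound1_L_q} are accurate and merely elaborate on what the paper leaves implicit.
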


\begin{proof}
From Lemma \ref{lemma_BoundPertHC}, we know:
$$
\|\mathcal{D}^{(r)}_N f - \mathcal{D}^{(r)}_N f^\delta\|_{2}
\leq c\,\delta\, N^{2r-1/p+1/2} .
$$
Applying inequality (\ref{P_N}) for any $2\le q\le \infty$, we obtain
$$
\|\mathcal{D}^{(r)}_N f - \mathcal{D}^{(r)}_N f^\delta\|_{q}
\leq c\, \delta\, N^{2r-1/p-2/q+3/2} ,
$$
which completes the proof.
\end{proof}

By combining Lemmas \ref{lemma_Bound1_L_q} and \ref{lemma_Bound2_L_q}, we arrive at the following statement.

\begin{theorem} \label{Th1_L_q}
Let $f\in W^\mu_{s}$, $1\leq s< \infty$, $\mu>2r-1/s-2/q+3/2$, $2\leq q\le \infty$, and
let the condition (\ref{perturbation2}) be satisfied for $1\le p\le \infty$.
Then for $N\asymp \delta^{-\frac{1}{\mu-1/p+1/s}}$ it holds
$$
\|f^{(r)} - \mathcal{D}^{(r)}_N f^\delta\|_{q}
\leq c\, \delta^{\frac{\mu-2r+1/s+2/q-3/2}{\mu-1/p+1/s}} .
$$
\end{theorem}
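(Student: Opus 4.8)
The plan is to reproduce verbatim the two-term balancing argument that proved Theorems \ref{Th2} and \ref{Th1}, now carried out in the $L_{q}$ metric, with the two required ingredients supplied directly by Lemmas \ref{lemma_Bound1_L_q} and \ref{lemma_Bound2_L_q}. First I would split the full error through the decomposition (\ref{fullError}) and apply the triangle inequality in $L_{q}$, giving
$$
\|f^{(r)} - \mathcal{D}^{(r)}_N f^\delta\|_{q}
\le \|f^{(r)}-\mathcal{D}_N^{(r)} f\|_{q}
+ \|\mathcal{D}^{(r)}_N f - \mathcal{D}^{(r)}_N f^\delta\|_{q} .
$$
Into the first summand I substitute the bound of Lemma \ref{lemma_Bound1_L_q}, valid since $\mu>2r-1/s-2/q+3/2$, and into the second the bound of Lemma \ref{lemma_Bound2_L_q}, valid for the prescribed range $1\le p\le\infty$. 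This reduces the statement to a purely arithmetic optimization in the single parameter $N$.

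Next I would extract the common algebraic factor. The two contributions are $c\,N^{-\mu+2r-1/s-2/q+3/2}$ and $c\,\delta\,N^{2r-1/p-2/q+3/2}$, so factoring out $N^{2r-2/q+3/2}$ yields
$$
\|f^{(r)} - \mathcal{D}^{(r)}_N f^\delta\|_{q}
\le c\,N^{2r-2/q+3/2}\bigl(N^{-\mu-1/s} + \delta\,N^{-1/p}\bigr) .
$$
The bracket is balanced by equating its two terms, $N^{-\mu-1/s}\asymp \delta\,N^{-1/p}$, i.e.\ $N^{\mu+1/s-1/p}\asymp\delta^{-1}$, which is exactly the prescribed rule $N\asymp\delta^{-1/(\mu-1/p+1/s)}$. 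Substituting this $N$ back, each of the two terms contributes $\delta$ to the power $\tfrac{\mu-2r+1/s+2/q-3/2}{\mu-1/p+1/s}$, and the two exponents coincide, giving the asserted estimate.

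I do not expect a genuine obstacle: the analytical difficulty lives entirely inside the two lemmas (the change-of-summation manipulations, the Hölder estimates, and the Nikolskii-type inequality (\ref{P_N}) together with (\ref{g})), all of which may be invoked as black boxes here. The only point meriting care is the bookkeeping of exponents: one should verify that the two summands indeed produce the \emph{same} power of $\delta$ after substituting the optimal $N$, so that neither dominates and the balancing is genuine; this is the same consistency check performed in the proofs of Theorems \ref{Th2} and \ref{Th1}, with $-3/2$ replaced by $-2/q+3/2$ throughout. It is also worth noting that the hypothesis $\mu>2r-1/s-2/q+3/2$ is precisely what Lemma \ref{lemma_Bound1_L_q} requires, so no additional restriction on the parameters is introduced at this stage.
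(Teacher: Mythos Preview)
Your proposal is correct and follows essentially the same approach as the paper: split via (\ref{fullError}), apply Lemmas \ref{lemma_Bound1_L_q} and \ref{lemma_Bound2_L_q}, factor out $N^{2r-2/q+3/2}$, and substitute the balancing choice $N\asymp\delta^{-1/(\mu-1/p+1/s)}$. The paper's own proof is line-for-line the same argument, without the additional commentary on consistency of exponents.
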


\begin{proof}
Applying Lemmas \ref{lemma_Bound1_L_q} and \ref{lemma_Bound2_L_q} to equation (\ref{fullError}), we obtain
$$
\|f^{(r)} - \mathcal{D}^{(r)}_N f^\delta\|_{q}
\leq \|f^{(r)}-\mathcal{D}_N^{(r)} f\|_{q} +
\|\mathcal{D}^{(r)}_N f - \mathcal{D}^{(r)}_N f^\delta\|_{q}
$$
$$
\leq c\, N^{-\mu+2r-1/s-2/q+3/2} + c\, \delta N^{2r-1/p-2/q+3/2}
$$
$$
= c\, N^{2r-2/q+3/2} \left(N^{-\mu-1/s} + \delta N^{-1/p}\right) .
$$
By substituting $N\asymp \delta^{-\frac{1}{\mu-1/p+1/s}}$ into this expression, we complete the proof of Theorem.
\end{proof}

\begin{corollary} \label{Cor3}
For the problem under consideration, the truncation method $\mathcal{D}^{(r)}_{N}$ (\ref{ModVer}) achieves an accuracy of:
$$
O\Big(\delta^{\frac{\mu-2r+1/s+2/q-3/2}{\mu-1/p+1/s}}\Big)
$$
on the class $W^{\mu}_{s}$, $\mu>2r-1/s-2/q+3/2$, requiring
$$
\card([r,N]) \asymp
N \asymp \delta^{-\frac{1}{\mu-1/p+1/s}}
$$
perturbed Fourier-Legendre coefficients.
\end{corollary}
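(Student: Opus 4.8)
The plan is to read off the accuracy statement directly from Theorem \ref{Th1_L_q} and then verify the counting claim by inspecting the definition of the method, so that the corollary becomes a bookkeeping consequence of results already established.

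First I would invoke Theorem \ref{Th1_L_q} under the stated hypotheses $f\in W^\mu_s$, $1\le s<\infty$, $\mu>2r-1/s-2/q+3/2$, $2\le q\le\infty$, and condition (\ref{perturbation2}) for $1\le p\le\infty$. With the prescribed parameter choice $N\asymp\delta^{-\frac{1}{\mu-1/p+1/s}}$, the theorem yields
$$
\|f^{(r)}-\mathcal{D}^{(r)}_N f^\delta\|_{q}\le c\,\delta^{\frac{\mu-2r+1/s+2/q-3/2}{\mu-1/p+1/s}},
$$
which is exactly the asserted accuracy $O(\cdot)$ on the class $W^\mu_s$. Thus the first half of the corollary requires no new argument.

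For the second assertion I would argue from the defining formula (\ref{ModVer}): the operator $\mathcal{D}^{(r)}_N f^\delta(t)=\sum_{k=r}^{N}\langle f^\delta,\varphi_k\rangle\varphi_k^{(r)}(t)$ consumes precisely the perturbed Fourier--Legendre coefficients $\langle f^\delta,\varphi_k\rangle$ indexed by the integers $k$ in $[r,N]$. Hence the number of coefficients used equals $\card([r,N])=N-r+1$. Since $r$ is a fixed integer independent of $\delta$, while $N\asymp\delta^{-\frac{1}{\mu-1/p+1/s}}\to\infty$ as $\delta\to 0^{+}$, the lower-order shift by $r$ is asymptotically negligible and we obtain $\card([r,N])=N-r+1\asymp N\asymp\delta^{-\frac{1}{\mu-1/p+1/s}}$, as claimed.

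There is no genuine obstacle in this argument: the corollary simply packages Theorem \ref{Th1_L_q} together with a direct reading of the summation range in (\ref{ModVer}). The only point warranting explicit (but trivial) justification is that subtracting the fixed constant $r$ does not alter the asymptotic order of the coefficient count, which is guaranteed by $N\to\infty$ as $\delta\to 0^{+}$.
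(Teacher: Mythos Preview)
Your proposal is correct and matches the paper's approach: the corollary is stated there without proof, as an immediate consequence of Theorem~\ref{Th1_L_q} together with the definition~(\ref{ModVer}) of the method. Your explicit justification that $\card([r,N])=N-r+1\asymp N$ is the only additional detail, and it is trivially true since $r$ is fixed while $N\to\infty$.
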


\begin{remark} \label{comp1}
In previous research (see \cite{Lu&Naum&Per}), error estimates for the truncation method
$\mathcal{D}^{(r)}_{N}$ (\ref{ModVer}) were only established for the specific case where $r=1$, $p=s=2$,
with output spaces limited to $C$ and $L_{2}$.
Thus, Theorems \ref{Th2} and \ref{Th1_L_q} significantly expand upon these earlier findings by generalizing
to arbitrary values of $r$, $p$, $s$, and $q$.
\end{remark}

\end{document}